\newtheorem {lemma}{Lemma}
\newtheorem {theorem} {Theorem}
\newtheorem{claim}{Claim}
\begin{document}

\title{\bf Toughness and existence of $2$-factors}

\author{Leyou Xu\footnote{email: leyouxu@m.scnu.edu.cn}, Bo Zhou\footnote{Corresponding author; email: zhoubo@m.scnu.edu.cn}\\
School of Mathematical Sciences, South China Normal University\\
Guangzhou 510631, P.R. China}

\date{}
\maketitle

\begin{abstract}
A graph is $t$-tough if the deletion of any set of, say, $m$ vertices from the graph leaves a graph with at most $\frac{m}{t}$ components.  In 1973, Chv\'{a}tal  suggested the problem of relating toughness to factors in graphs. In 1985, Enomoto et al. showed that  each $2$-tough graph with at least three vertices has a $2$-factor, but for any $\epsilon>0$, there exists a $(2-\epsilon)$-tough graph on at least $3$ vertices having no $2$-factor.
In recent years, the study of sufficient conditions
for graphs with toughness less than $2$ having a $2$-factor has received a paramount interest.
In this paper, we give new tight sufficient conditions for a $t$-tough graph having a $2$-factor when $1\le t<2$ by involving  independence number,  minimum degree, connectivity and forbidden forests, improving and extending some existing results.
\\ \\
{\bf Mathematics Subject Classifications:} 05C70, 05C42\\ \\
{\bf Keywords: } toughness, $2$-factor, connectivity, independence number, forbidden forests

\end{abstract}

\section{Introduction}

Let $G$ be a graph with vertex set $V(G)$ and edge set $E(G)$.
For $v\in V(G)$,  $N_G(v)$ denotes the neighborhood of $v$ in $G$ and $d_G(v)$  the degree of $v$, i.e., $d_G(v)=|N_G(v)|$. Denote by $\delta(G)$ the minimum degree of $G$.
For a nonnegative integer $k$, a graph is called $k$-regular if $d_G(v)=k$ for each $v\in V(G)$.
A $k$-factor of a graph
is a $k$-regular spanning subgraph of the graph. A Hamilton cycle is a connected $2$-factor.
The independence number $\alpha(G)$ of $G$ is the cardinality of a maximum set of independent
vertices of $G$.
A connected graph $G$ is  $k$-connected if it has more than $k$ vertices and remains connected whenever fewer than $k$ vertices are deleted.

For a given graph $H$, a graph $G$ is called $H$-free if $G$ does not contain $H$ as an induced subgraph.
For vertex disjoint graphs $H$ and $F$, $H\cup F$ denotes the disjoint union of graphs $H$ and $F$.
For positive integer $a$ and a graph  $H$, $aH$ denotes the graph consisting of $a$ disjoint copies of $H$. As usual, $P_n$ denotes the path on $n$ vertices.
A forest is linear if it consists of paths.

For a graph $G$ with $\emptyset\ne S\subset V(G)$, denote by $G[S]$ the subgraph of $G$ induced by $S$.
Let $G-S=G[V(G)\setminus S]$.
The number of components of $G$ is denoted by $c(G)$.
For a non-complete connected graph $G$, a cut set  is a subset $S\subset V(G)$ such that $c(G-S)>1$.
The toughness of a graph $G$, denoted by $\tau(G)$, is defined as
\[
\tau(G)=\min\frac{|S|}{c(G-S)}
\]
where the minimum is taken over all vertex cuts $S$ of $G$  if $G$ is non-complete
$\tau(G)=\infty$ otherwise.
For a positive real number $t$,
a graph $G$ is said to be $t$-tough if $\tau(G)\ge t$.
This concept was introduced by  Chv\'{a}tal \cite{Chv}, who
conjectured that there exists a constant $t_0$ such that any $t_0$-tough graph has a Hamiltonian cycle,  and that any $k$-tough graph on $n$ vertices with $n\ge k+1$ and $kn$ even has a $k$-factor.
Enomoto,
Jackson, Katerinis and Saito \cite{EJKS} confirmed the latter and proved  that the
 result is best possible in the sense that `$k$-tough' cannot be replaced by `$(k-\epsilon)$-tough' for any positive real number $\epsilon$. Thus,
$t_0\ge 2$, and for any $\epsilon>0$, there exists a $(2-\epsilon)$-tough graph on at least $3$ vertices having no $2$-factor.

So, in recent years, the study of sufficient conditions
for graphs with toughness less than $2$ having a $2$-factor has received a paramount interest.

\begin{theorem}[Bauer and Schmeichel] \label{B1}
Let $t$ be a rational number with $1\le t<2$, and let $G$ be a $t$-tough graph on at least three vertices. 
\begin{enumerate}
\item[(i)] \cite[Theorem 3]{BS} If  $\delta(G)\ge
\frac{(2-t)n}{1+t}$,   then $G$ contains a $2$-factor.

\item[(ii)] \cite[Theorem 5]{BS} If $\frac{3}{2}\le t<2$ and $\delta(G)\ge \frac{(3t-2-t^2)n}{7t-7-t^2}$, then $G$ contains a $2$-factor.
\end{enumerate}
\end{theorem}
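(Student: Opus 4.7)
The plan is to invoke the specialization of Tutte's $f$-factor theorem to $2$-factors: $G$ has a $2$-factor if and only if for every pair of disjoint vertex subsets $S,T\subseteq V(G)$,
\[
\theta(S,T):=2|S|+\sum_{v\in T}d_{G-S}(v)-2|T|-q(S,T)\ge 0,
\]
where $q(S,T)$ denotes the number of components $C$ of $G-S-T$ for which $e_G(T,V(C))$ is odd.

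I would argue by contradiction. Supposing $G$ has no $2$-factor, there exist disjoint $S,T$ with $\theta(S,T)\le -2$ (parity considerations force $\theta$ to be even), and I would choose such a pair so that $|T|$ is minimum and, subject to that, $|S|$ is minimum. Standard minimality arguments then restrict the structure of $T$ and of the components of $G-S-T$: in particular, each $v\in T$ can be shown to have small degree in $G-S$, and the odd components of $G-S-T$ can be taken to be small.

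Next I would combine the toughness and minimum-degree hypotheses. Once $S\cup T$ is verified to be a genuine cut set, $t$-toughness gives $c(G-S-T)\le(|S|+|T|)/t$, hence $q(S,T)\le(|S|+|T|)/t$. The minimum-degree assumption yields, for every $v\in T$, the bound $d_{G-S}(v)\ge\delta(G)-|S|$, so $\sum_{v\in T}d_{G-S}(v)\ge|T|(\delta(G)-|S|)$. Combining these with the vertex count $|S|+|T|+\sum_i|V(C_i)|=n$ and substituting the hypothesized lower bound on $\delta(G)$, I expect to derive a nonnegative linear combination of $|S|$, $|T|$, $q$, $n$ that forces $\theta(S,T)\ge 0$, the desired contradiction.

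The main obstacles I anticipate are the degenerate configurations: (a) $T=\emptyset$, where the contradiction must come purely from toughness applied to $S$; (b) $|S|\ge\delta(G)$, which renders the degree bound vacuous and requires instead an estimate using how $T$ joins components of $G-S$; (c) $G-S-T$ being connected, so the toughness inequality must be applied via a different cut. For part~(ii), where the hypothesis $\delta(G)\ge(3t-2-t^2)n/(7t-7-t^2)$ is genuinely weaker than that of part~(i) for $3/2\le t<2$, I expect the crux to be a finer case analysis: partition the odd components of $G-S-T$ by size, use the structural bound on $d_{G-S}(v)$ for $v\in T$ to limit how $T$ can interact with small components, and track both the vertex count within small components and the contribution to $\sum_{v\in T}d_{G-S}(v)$ more delicately so as to sharpen the final linear combination to the exact threshold $(3t-2-t^2)/(7t-7-t^2)$.
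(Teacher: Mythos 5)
This theorem is not proved in the paper at all: it is quoted from Bauer and Schmeichel \cite{BS} (their Theorems 3 and 5), so the only meaningful comparison is with the original argument, whose overall framework (Tutte's $2$-factor criterion, an extremal choice of the pair $(S,T)$, and a toughness-based count of components) your sketch does echo. However, as it stands your proposal has a genuine gap: the entire quantitative core is deferred. The contradiction is supposed to come from ``a nonnegative linear combination of $|S|$, $|T|$, $q$, $n$,'' but you never exhibit it, and the naive combination you set up --- $q(S,T)\le(|S|+|T|)/t$ together with $\sum_{v\in T}d_{G-S}(v)\ge|T|(\delta(G)-|S|)$ and $n\ge|S|+|T|+q$ --- does not by itself reach the threshold $\frac{(2-t)n}{1+t}$, let alone the sharper bound $\frac{(3t-2-t^2)n}{7t-7-t^2}$ of part (ii); the latter is only asymptotically sharp and its derivation in \cite{BS} rests on a delicate case analysis of how $T$ attaches to components of $G-S-T$, which you explicitly leave as something you ``expect'' to work. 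A proof that ends where the real difficulty begins is not a proof of this statement.

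There is also a structural issue with your extremal choice. You minimize $|T|$ first and then $|S|$, and from this you assert that each $v\in T$ has small degree in $G-S$ and that the odd components ``can be taken to be small.'' Neither claim follows from that minimality without argument: the structural properties actually used in this line of work (independence of $T$, the bounds $e(v,H)\le 1$ for $v\in T$ and $e(u,T)\le 1$ for $u$ in an odd component, and $|T|\ge|S|+\sum_{t\ge1}t|\mathcal{C}_{2t+1}|+1$) are obtained from the opposite extremal choice --- $|S|$ maximum and, subject to that, $|T|$ minimum, as in Lemma~\ref{AB} of this paper --- and there is no general way to force odd components of $G-S-T$ to be small. If you intend to reconstruct the Bauer--Schmeichel proof, you need to fix the extremal choice that actually yields these lemmas and then carry out the full counting argument, separately for (i) and for the considerably harder (ii).
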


The result in Theorem \ref{B1} is asymptotically best possible for every rational $t\in [1,\frac{3}{2})$ and for infinitely many rational $t\in [\frac{3}{2},2)$ \cite{BS}.

Bauer et al. \cite{BKKV} proved that  every $\frac{3}{2}$-tough $5$-chordal graph  with at least three vertices
has a $2$-factor \cite{BKKV}, where a $5$-chordal graph is one with no induced cycles of length larger than $5$. This implies that every $\frac{3}{2}$-tough $2P_2$-free graph on at least three vertices has a $2$-factor, see \cite{OS1}.
Sanka  \cite{San} proved that every $\frac{3}{2}$-tough $(P_{10}\cup P_4)$-free graph  has a $2$-factor.
Grimm, Johnsen and Shan \cite{GJS} found sharp  bound less than $2$ on $t$ such that every $t$-tough $R$-free graphs with at least three vertices has a $2$-factor for any forest $R$ on $5,6,7$ vertices.
There are also works on
characterization  all pairs of graphs R, S such that every $R$ and $S$-free graph of sufficiently large order has a $2$-factor, see, e.g. \cite{PRP}.
The connectivity is used to study the
existence of Hamilton cycle in  $R$-free graphs for a linear forest $R$ in \cite{SS,XLZ}.

Inspired the above work, in this paper, we give new sufficient conditions for $t$-tough graphs with $t<2$ to have a $2$-factor. The main results are the following theorems.
The first one involves independence number and minimum degree.
The second and third ones
involve  connectivity and forbidden forests.

\begin{theorem}\label{inde2}
Let $\epsilon$ be a rational number, $0<\epsilon \le 1$.
Let $G$ be a $(2-\epsilon)$-tough graph with at least three vertices. If $\delta(G)\ge \epsilon\alpha(G)$, then $G$ contains a $2$-factor.
\end{theorem}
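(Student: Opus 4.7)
My plan is to argue by contradiction using Tutte's 2-factor criterion, combined with the toughness hypothesis applied to $S$ alone (rather than to $S\cup T$) through a bipartite-contraction of $G-S-T$.

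Suppose $G$ has no 2-factor. By Tutte's theorem, there exist disjoint sets $S,T\subseteq V(G)$ with
\[
q(S,T) \geq 2|S|+\sum_{v\in T} d_{G-S}(v) - 2|T| + 2,
\]
where $q(S,T)$ counts components $C$ of $G-S-T$ with $e_G(V(C),T)$ odd. By a standard minimality reduction we may assume $T$ is independent in $G$ and $d_{G-S}(v)\leq 1$ for every $v\in T$; set $\ell := \sum_{v\in T} d_{G-S}(v)\leq |T|$. Since each odd component has at least one edge to $T$, $\ell \geq q(S,T)$, and substitution yields $|T|\geq|S|+1$. Each $v\in T$ satisfies $d_G(v)\leq|S|+1$, giving $\delta(G)\leq|S|+1$ when $T\neq\emptyset$ (the case $T=\emptyset$ is immediate from toughness). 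Selecting $T$ itself or one vertex per odd component gives $\alpha(G)\geq \max\{|T|,q(S,T)\}$, so by the hypothesis $\delta(G)\geq\epsilon\alpha(G)$ we obtain $\epsilon|T|\leq |S|+1$.

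The key structural step is a lower bound on $c(G-S)$: contracting each component of $G-S-T$ to a single vertex produces a bipartite graph $H$ on $|T|+c(G-S-T)$ vertices with $|E(H)|\leq \ell$ and $c(H)=c(G-S)$, so $c(G-S)\geq |T|+q(S,T)-\ell$. Since $|T|\geq 2$ yields $c(G-S)\geq 2$, toughness applied to $S$ gives $|S|\geq (2-\epsilon)c(G-S)\geq (2-\epsilon)(|T|+q(S,T)-\ell)$; combined with the rearrangement $|T|+q(S,T)-\ell\geq 2|S|-|T|+2$ of the violated Tutte inequality, this simplifies to $(2-\epsilon)|T|\geq (3-2\epsilon)|S|+(4-2\epsilon)$, and together with $|S|\geq \epsilon|T|-1$ it yields
\[
2(1-\epsilon)^2|T|\geq 1.
\]
For $\epsilon=1$ this is the contradiction $0\geq 1$; moreover, the extremal case $\ell=q(S,T)$ forces $c(G-S)\geq|T|$, which directly contradicts $(2-\epsilon)|T|\leq|T|-1$ for any $\epsilon\leq 1$.

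For $\epsilon<1$ with $\ell-q(S,T)\geq 2$ (the only remaining case, by parity of $\ell-q(S,T)$), the argument is closed via a strengthened $\alpha$-bound: from each non-singleton odd component $C$ with $|V(C)|>e_G(V(C),T)$ one may choose a vertex outside $N(T)$ (possible because $|V(C)\cap N(T)|\leq e_G(V(C),T)$ always, each vertex in $V(C)\cap N(T)$ accounting for at least one $T$-edge); then $\alpha(G)\geq |T|+q_B$, where $q_B$ counts such components. Controlling $q_B$ via the inequality $\ell-q(S,T)\geq 2q_C$ for the remaining non-singleton odd components (those with $|V(C)|\leq e_G(V(C),T)$), and re-running the toughness estimate with this refined $\alpha$-bound, delivers the contradiction for every $\epsilon\in(0,1]$. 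The main obstacle is this last quantitative step; the extremal case and the $\epsilon=1$ case are handled cleanly by the main inequality above.
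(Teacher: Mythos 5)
Your argument has two genuine gaps, and the first is fatal. The step in which you assert that ``by a standard minimality reduction we may assume $T$ is independent and $d_{G-S}(v)\le 1$ for every $v\in T$'' is not a known reduction and cannot simply be assumed. What an extremal (biased) barrier actually gives — this is Lemma \ref{AB}, due to Kanno and Shan, for a barrier maximizing $|S|$ and then minimizing $|T|$ — is that $T$ is independent, that each vertex of $T$ sends at most one edge into each \emph{individual} odd component, and that each vertex of an odd component sends at most one edge into $T$; a vertex of $T$ may still be adjacent to arbitrarily many distinct odd components, so $d_{G-S}(v)$ is unbounded. (Trying to force $d_{G-S}(v)\le 1$ by moving such a $v$ into $S$ raises $\delta(S,T)$ by $4$, and deleting $v$ from $T$ changes $q(S,T)$ uncontrollably, so in either case the barrier property may be destroyed; the biased barrier need not satisfy your condition.) Vertices of $T$ meeting two or more odd components are exactly the hard configuration: the paper's entire second case (the iterative construction of the cut set $W$ containing the vertices $u_1,\dots,u_{\ell'}$ with $h_{G_i}(u_i)\ge 2$, followed by the equality analysis that produces an independent set of size $\alpha(G)+1$) exists to handle them. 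With your reduction everything collapses to what is essentially the paper's easy case $\max_{u\in B}h_G(u)\le 1$; without it, your key inequality $\delta(G)\le |S|+1$, hence $\epsilon|T|\le |S|+1$, has no justification.

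Even granting the reduction, the proof is incomplete. Your main chain yields $2(1-\epsilon)^2|T|\ge 1$, which is a contradiction only when $\epsilon=1$; the remaining case $0<\epsilon<1$ with $\ell-q(S,T)\ge 2$ is left as an admitted sketch (``the main obstacle is this last quantitative step''), so the theorem is not established for any $\epsilon<1$. Two further steps are unsupported: the claim that $|T|\ge 2$ forces $c(G-S)\ge 2$ fails in principle, since all of $T$ may attach to a single component of $G-S-T$ and your lower bound $c(G-S)\ge |T|+q(S,T)-\ell$ can be as small as $2|S|-|T|+2\le 1$, in which case the toughness hypothesis applied to $S$ gives nothing; and the refined bound $\alpha(G)\ge |T|+q_B$ is never turned into a quantitative estimate. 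For comparison, the paper never needs to bound $d_{G-A}(v)$: it builds explicit cut sets (choosing $2t$ attachment vertices inside each component of $\mathcal{C}_{2t+1}$, plus the high-$h$ vertices of $B$) so that the number of resulting components is essentially $|B|$, plays this against $\tau(G)\ge 2-\epsilon$ via Lemma \ref{AB}(v), and disposes of the equality case by exhibiting an independent set larger than $\alpha(G)$. You would need either to prove your reduction (which I believe is false in general) or to handle vertices of $T$ adjacent to several odd components directly, as the paper does.
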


Theorem \ref{inde2} echoes the classical result of Chv\'{a}tal and Erd\H{o}s \cite{CE} stating that
every graph $G$ on at least three vertices with $\kappa(G)\ge \alpha(G)$ (known as  Chv\'{a}tal--Ed\H{o}s condition)  has a Hamilton cycle, and the result of  Niessen \cite{Nis}: every graph $G$ on at least three vertices  with  $\delta(G)>\alpha(G)$ has a $2$-factor. Let $G$ be the graph in Theorem \ref{inde2} with 
$n=|V(G)|$ and $t=2-\epsilon$. If $\alpha(G)=1$, then $G\cong K_n$. Suppose that $\alpha(G)\ge 2$. Then $t\le \tau(G)\le \frac{n-\alpha(G)}{\alpha(G)}$, so $\alpha(G)\le \frac{n}{1+t}$, which implies that $\frac{(2-t)n}{1+t}\ge \epsilon\alpha(G)$.  Thus, Theorem \ref{inde2} improves \cite[Theorem 3]{BS}.



\begin{theorem}\label{t1} The following statements are true.
\begin{enumerate}
\item[(i)]
Let $\ell=1,2$ and let $k$ be a positive integer. Any $1$-tough $(k+\ell-1)$-connected
$(P_{2\ell}\cup kP_1)$-free graph with at least three vertices has a $2$-factor.

\item[(ii)]
Let $k$ be a positive integer.
Any $1$-tough $(k+1)$-connected $(P_3\cup kP_1)$-free graph  with at least three vertices has a $2$-factor.
\end{enumerate}
\end{theorem}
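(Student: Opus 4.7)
I plan to prove both parts by contradiction using Tutte's $f$-factor theorem with $f\equiv 2$. Suppose $G$ satisfies the hypotheses but has no $2$-factor; then there exist disjoint $S,T\subseteq V(G)$ with
\[
2|S|-2|T|+\sum_{v\in T}d_{G-S}(v)-q(S,T)\le -2,
\]
where $q(S,T)$ counts the components $C$ of $G-(S\cup T)$ having an odd number of edges to $T$. I would choose such a witness with $|T|$ minimum; standard minimality arguments then force $T$ to be an independent set, each vertex of $T$ to have degree at most one in $G-S$, and almost all components counted by $q(S,T)$ to be isolated vertices of $G-S-T$.

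Next I would invoke $1$-toughness in the form $|S\cup T|\ge c(G-(S\cup T))\ge q(S,T)$, and combine with the deficiency inequality to obtain $|S|+|T|\ge 2|S|-2|T|+2$, so $q(S,T)$ is large compared to $|S|$. In particular, $G-S$ contains at least $k$ ``small'' components (most of them singletons after removal of $T$). Picking a representative from each of $k$ distinct such components produces an induced $kP_1$, because vertices in different components of $G-S$ are pairwise non-adjacent.

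The crux of the argument is then to locate, using the connectivity hypothesis, an induced $P_{2\ell}$ (in part (i)) or $P_3$ (in part (ii)) that is vertex-disjoint from and has no neighbor in the chosen $kP_1$; together they form a forbidden induced $P_{2\ell}\cup kP_1$ (respectively $P_3\cup kP_1$), a contradiction. The connectivity thresholds $k+\ell-1$ and $k+1$ are tailored so that, after removing the (at most $k-1$) vertices needed to ``block off'' the chosen $kP_1$, the residual graph is still connected enough to contain the required induced path; one would build such a path starting at a low-degree vertex of $T$, extending first to a neighbor in a large surviving component of $G-S$, and continuing greedily (or via a short Menger-type argument) while choosing each extension to avoid every vertex adjacent to the chosen $kP_1$.

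The main obstacle I foresee is this simultaneous selection step: the $k$ representatives of the $kP_1$ and the vertices of the induced path must be chosen together so that no edge joins the path to the $kP_1$ and the path itself remains induced. The case analysis will be delicate when $|S|$ is near the connectivity bound, when some deficient components of $G-S$ are non-trivial, and especially in the boundary case $T=\emptyset$, where the deficiency inequality degenerates to $q(S,\emptyset)\ge 2|S|+2$ and one must apply the forbidden-subgraph hypothesis essentially directly to $G-S$ against the chosen $kP_1$.
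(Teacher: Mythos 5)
Your proposal stops exactly where the real work begins, and the plan as stated would not close the gap. The crux you yourself flag --- producing an induced $P_{2\ell}$ (resp.\ $P_3$) together with an induced $kP_1$ with no edges between them --- is not resolved by a greedy or Menger-type extension ``avoiding every vertex adjacent to the chosen $kP_1$'': nothing in your setup guarantees such an extension exists, and controlling inducedness of a path of prescribed length while dodging the neighborhood of $k$ prescribed vertices is precisely the hard part. The paper avoids this simultaneous-selection problem entirely by making both pieces live inside the barrier structure: it takes a \emph{biased} barrier $(A,B)$ (maximize $|A|$, then minimize $|B|$) and uses the Kanno--Shan properties (Lemma \ref{AB}): $B$ is independent, every vertex of an odd component has at most one neighbor in $B$, every vertex of $B$ has at most one neighbor in each odd component, and $|B|\ge|A|+\sum_{t\ge1}t|\mathcal{C}_{2t+1}|+1$. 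Then (a) connectivity is used not to re-connect a residual graph but to force $|A|$, hence $|B|$, to be large (Claim \ref{c3}: $|B|\ge k+\ell$, because $A$ together with one attachment vertex, or with $N_{G-A}(w)$ for $w\in B$, is a cut set); (b) $1$-toughness enters through Lemma \ref{odd} (GJS), which guarantees an odd component $H'$ with $e(H',B)\ge3$, and a shortest path in $H'$ between two attachment vertices, extended by their two $B$-neighbors, gives an induced $P_{\ge 2\ell}$; (c) the $kP_1$ is simply the rest of $B$, and non-adjacency to the path is automatic from the barrier properties. Your plan has no analogue of (b) and takes the $kP_1$ from distinct components of $G-S$ instead of from $B$, which is what creates the unsolved avoidance problem.

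In addition, two structural claims you make are not consequences of your minimality choice and are false in general: minimizing $|T|$ does not force every vertex of $T$ to have degree at most one in $G-S$ (in a biased barrier a vertex of $B$ may send one edge to each of many odd components --- indeed Theorem \ref{p5} exploits exactly such vertices with $o(u)\ge2$), nor does it force the components counted by $q(S,T)$ to be (mostly) singletons --- the odd components can be arbitrary graphs, and in the extremal examples $H_n$, $G_{n,k}$ they are triangles or larger. Also note that $1$-toughness in the crude form $|S\cup T|\ge q(S,T)$ is too weak here: it disposes of the case $T=\emptyset$ but does not yield the component with at least three edges to $B$ that the construction of the induced $P_4$ (and later $P_5$, $P_7$) requires. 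So the proposal, while starting from the same Tutte-type criterion, is missing both the biased-barrier machinery and the toughness lemma that make the forbidden-subgraph contradiction possible.
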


\begin{theorem}\label{p5} The following statements are true.
\begin{enumerate}
\item[(i)]
Let $\ell=2,3$ and let $k$ be a positive integer.
Any $\frac{3}{2}$-tough $(k+\ell-1)$-connected $(P_{2\ell+1}\cup kP_1)$-free graph  with at least three vertices has a $2$-factor.

\item[(ii)]
Let $k$ be a positive integer.
Any $\frac{3}{2}$-tough $(k+2)$-connected $(P_6\cup kP_1)$-free graph with at least three vertices has a $2$-factor.
\end{enumerate}
\end{theorem}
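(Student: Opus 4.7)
The plan is to argue by contradiction: assume $G$ satisfies all the hypotheses of Theorem~\ref{p5} but admits no $2$-factor. The first step is to invoke the Belck--Tutte deficiency characterization specialized to $2$-factors (as used, e.g., in \cite{EJKS,BS}) to produce disjoint $S, T \subseteq V(G)$, with $T$ independent in $G$, violating
\[
2|S| + \sum_{v \in T} d_{G-S}(v) \ge 2|T| + q(S, T),
\]
where $q(S, T)$ counts components $C$ of $G - (S \cup T)$ with $e_G(T, V(C))$ odd. In parallel, Theorem~\ref{B1}(ii) applied at $t = \tfrac{3}{2}$ lets us assume $\delta(G) < n/5$ (otherwise $G$ already has a $2$-factor), and $\tfrac{3}{2}$-toughness gives $c(G - X) \le \tfrac{2}{3}|X|$ for every vertex cut $X$.

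Next I would carry out a structural analysis of the violating pair $(S, T)$. The failure of the deficiency inequality, the toughness bound on $c(G - S)$, and $\delta(G) < n/5$ combine to force either $|T| \ge k$ or the existence of at least $k$ singleton odd components in $G - S - T$. In either case one extracts $k$ pairwise nonadjacent vertices $u_1, \ldots, u_k$ whose neighbors lie entirely in $S \cup T$; the natural source is $k$ vertices of $T$ (since $T$ is already independent), with the isolated odd components as an alternative when $|T|$ is small.

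The central step is to exhibit an induced copy of $P_{2\ell+1} \cup kP_1$ (resp.\ $P_6 \cup kP_1$). The $(k+\ell-1)$- (respectively $(k+2)$-) connectivity is used in two ways: first, it forces $|S|$ and the neighborhood of each $u_i$ to have size at least $k + \ell - 1$ (resp.\ $k+2$); second, after deleting $N[u_1] \cup \cdots \cup N[u_k]$, enough connectivity survives in the remaining portion of $G - T$ to allow a shortest-path construction. I would locate a component $D$ of $G - S - T$ (or a pair of components linked through $S$) large enough to house a $P_{2\ell+1}$ (resp.\ $P_6$), choose a shortest induced such path avoiding every $N(u_i)$, and adjoin the $u_i$'s to obtain the forbidden configuration, contradicting the $(P_{2\ell+1} \cup kP_1)$- (resp.\ $(P_6 \cup kP_1)$-)freeness.

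The main obstacle is this construction: producing an induced path of the exact required length ($2\ell + 1$ or $6$) that is also completely nonadjacent to the chosen $u_1, \ldots, u_k$. The connectivity thresholds $k + \ell - 1$ and $k+2$ are finely tuned so that, after deleting the $k$ ``forbidden'' neighborhoods from $S$, enough neighbors of $D$ remain to supply the required $\ell - 1$ (resp.\ $2$) internal $S$-vertices of the path. The inducedness is enforced by choosing a shortest such path in an auxiliary ``safe'' graph that records which $S$-vertices are nonadjacent to every $u_i$; the disjointness from and nonadjacency to $\{u_1, \ldots, u_k\}$ then follows from the construction of this auxiliary graph together with the independence of $T$ and the parity/degree information from the deficiency pair.
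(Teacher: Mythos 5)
Your proposal has the right contradiction framework (Tutte-type deficiency pair plus forbidden-subgraph extraction), but the central construction as described would not go through, and the key structural facts that make the paper's argument work are missing. First, you never establish the crucial starting point: using the $\frac{3}{2}$-toughness one must show that some vertex $u$ of the (independent) set $T=B$ has neighbors in at least \emph{two} distinct odd components of $G-(S\cup T)$ (Claim \ref{5c}); this is what allows a long induced path to be assembled \emph{through} $u$, joining pieces of two different components and further vertices of $B$. Your plan instead looks for an induced $P_{2\ell+1}$ (resp.\ $P_6$) \emph{inside a single component} $D$ of $G-S-T$, chosen as a shortest path in an auxiliary ``safe'' graph. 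That cannot work in general: in the extremal examples (e.g.\ $G_{n,k}$ and $\widehat{G}_{n,k}$ of Remark 3) the components of $G-(A\cup B)$ are triangles or large cliques, which contain no long induced paths at all. The inducedness in the paper comes not from a shortest-path choice inside a component but from the properties of a \emph{biased} barrier (Lemma \ref{AB}: $B$ independent, each vertex of $B$ has at most one neighbor in each odd component, each vertex of an odd component has at most one neighbor in $B$), which also guarantee that the remaining vertices of $B$ are nonadjacent to the constructed path, yielding the $kP_1$ part. You assert $T$ is independent without the extremal (biased) choice that justifies it, and your dichotomy ``either $|T|\ge k$ or there are $k$ singleton odd components,'' derived from $\delta(G)<n/5$ and toughness, is both unjustified and not what is needed: the paper's Claim \ref{5c2} shows $|B|\ge k+\ell$ directly from the $(k+\ell-1)$-connectivity together with Lemma \ref{AB}(v), and the $k$ isolated vertices are vertices of $B$ that are nonadjacent to the path by the barrier properties, not vertices ``whose neighbors lie entirely in $S\cup T$'' (vertices of $B$ typically do have neighbors in the components).

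Concretely, the missing ideas are: (1) the Kanno--Shan biased-barrier lemma (Lemma \ref{AB}) and the nonemptiness of $\bigcup_{t\ge1}\mathcal{C}_{2t+1}$ (Lemma \ref{odd}); (2) the toughness-based counting argument showing some $u\in B$ satisfies $o(u)\ge2$; (3) the assembly of the induced path $u_1v_1'P^{-1}v_1uv_2$ (for $\ell=2$) or the length-seven analogues in the two cases $u_1\ne u_2$, $u_1=u_2$ (for $\ell=3$), which weaves through $B$ and two odd components rather than living in one component; and (4) the connectivity-based bound $|B|\ge k+\ell$. The detour through Theorem \ref{B1}(ii) and $\delta(G)<n/5$ plays no role in a correct proof and does not substitute for any of these steps.
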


Denote by $K_n$ the complete graph on $n$ vertices, and $\overline{K_n}$ the complement of $K_n$. For two disjoint graphs
$G$ and $H$, the join of $G$ and $H$, denoted by $G\vee H$, is formed from $G\cup H$ by adding all possible edges between vertices of $G$ and vertices of $H$.

\begin{figure}[htbp]
\centering
\includegraphics[width=0.6\textwidth]{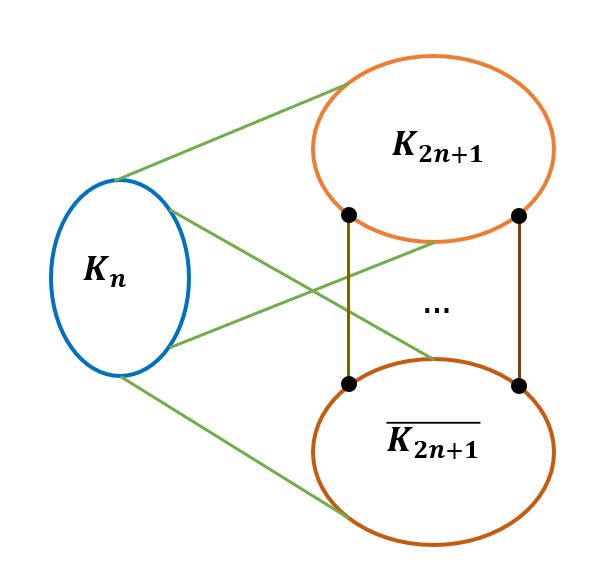}
\caption{The graph $H_{n}$}
\label{fh1}
\end{figure}

\noindent
{\bf Remark 1.}  
(a) For positive integer $n$, let $H_{n}$
be a graph obtained from $K_{n}\vee (\overline{K_{2n+1}}\cup K_{2n+1})$ by adding a perfect matching between vertices in $\overline{K_{2n+1}}$ and vertices in $K_{2n+1}$, see Fig.  \ref{fh1}.
It is easy to see that $\alpha(H_n)=2n+1$ and $\delta(H_n)=n+1$. From the proof of Theorem 3.3 in \cite{Chv}, we have  $\tau(H_n)=\frac{3n}{2n+1}$ and $H_{n}$ has no $2$-factor.
As $H_n$ is $(2-\frac{n+2}{2n+1})$-tough with $\delta(H_n)<n+2=\frac{n+2}{2n+1}\alpha(H_n)$,  the minimum degree condition in Theorem \ref{inde2} is sharp.
As $H_n$ is a graph with $\delta(H_n)\ge\frac{n+1}{2n+1}\alpha(H_n)$ and $\tau(H_n)=2-\frac{n+2}{2n+1}$, the toughness condition in Theorem \ref{inde2} is  asymptotically sharp.

(b) There are infinite graphs to which Theorem \ref{inde2}   applies  but Theorem 5 in \cite{BS} does not.

For positive integers $m$, $a$, $b$ and $c$, let $R_{m,a,b,c}=K_{cm}\vee amK_{bm}$.
Note that $\delta(R_{m,a,b,c})=(b+c)m-1$, $\alpha(R_{m,a,b,c})=am$ and $\tau(R_{m,a,b,c})=\frac{cm}{am}=\frac{c}{a}$.


Let $m\ge \frac{5(7ac-7a^2-c^2)}{2(3ac-2a^2-c^2)}$ and $\frac{3}{2}a\le c<2a$.
As
\[
\delta(R_{m,a,b,c})=(b+c)m-1>(2a-c)m=(2-\tau(R_{m,a,b,c}))\alpha(R_{m,a,b,c}),\]
we have by Theorem \ref{inde2} that $R_{m,a,b,c}$ has a $2$-factor.
However, the condition of \cite[Theorem 5]{BS} is not satisfied. This is because
as $c\ge \frac{3}{2}a$, we have
\[
m\ge \frac{5(7ac-7a^2-c^2)}{2(3ac-2a^2-c^2)}\ge \frac{b+c}{ab}\cdot \frac{7ac-7a^2-c^2}{(3ac-2a^2-c^2)},
\]
so
\[
\frac{3ac-2a^2-c^2}{7ac-7a^2-c^2}abm\ge  b+c,
\]
implying that 
\[
\frac{3\tau(R_{m,a,b,c})-2-\tau(R_{m,a,b,c})^2}{7\tau(R_{m,a,b,c})-7-\tau(R_{m,a,b,c})^2}(ambm+cm)>(b+c)m-1=\delta(R_{m,a,b,c}).
\]

\begin{figure}[htbp]
\centering
\begin{minipage}{0.48\textwidth}
\centering
\includegraphics[width=0.97\textwidth]{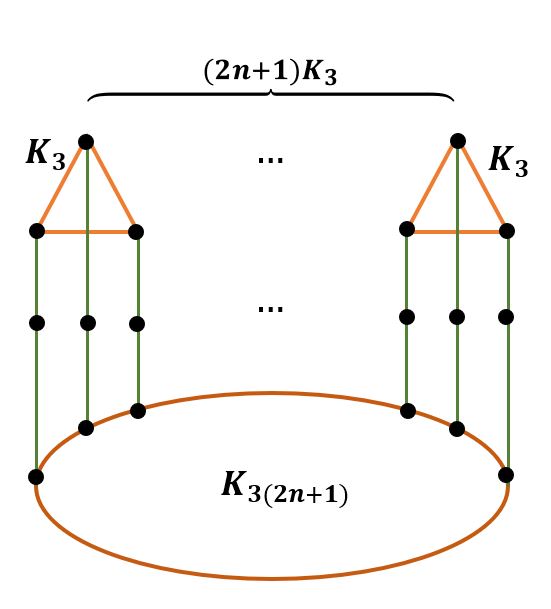}
\caption{The graph $G'_{n,k}$}
\label{g1}
\end{minipage}
\begin{minipage}{0.48\textwidth}
\centering
\includegraphics[width=0.98\textwidth]{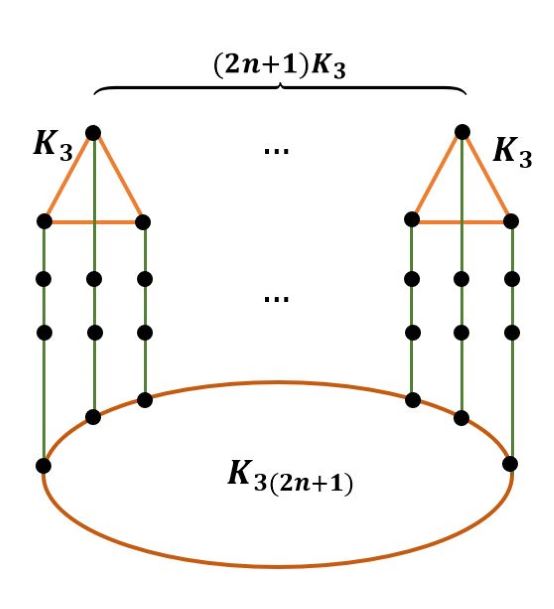}
\caption{The graph $G^*_{n,k}$}
\label{g2}
\end{minipage}
\end{figure}

\noindent{\bf Remark 2.} There are examples showing the sharpness of the conditions in Theorem \ref{t1}.
Let $\ell=1,2$. Let $H_n$ be defined as in Fig. \ref{fh1}.
Suppose that $n\ge k+\ell-2$.
It is easy to see that $H_{n}$ is a $(k+\ell-1)$-connected graph on $5n+2$ vertices and it contains $P_{2\ell}\cup kP_1$ ($P_3\cup kP_1$ as well if $\ell=2$) as an induced subgraph.
From the proof of Theorem 3.3 in  \cite{Chv}, $\tau(H_{n})=\frac{3n}{2n+1}>1$ and $H_{n}$ has no $2$-factor. So $H_{n}$ is a $1$-tough $(k+\ell-1)$-connected graph containing $P_{2\ell}\cup kP_1$ ($P_3\cup kP_1$ as well if $\ell=2$) as an induced subgraph that has no $2$-factor.

\noindent
{\bf Remark 3.}  There are also examples showing the sharpness of the conditions in Theorem \ref{p5}.

For positive integers $k$ and $n$ with $n\ge k-1$, let $G'_{n,k}$ be the graph obtained from $(2n+1)K_{3}\cup K_{3(2n+1)}$ by adding a perfect matching (denoted by $M$) between vertices in $(2n+1)K_{3}$ and vertices in $K_{3(2n+1)}$ and subdividing each edge in $M$ once, see Fig. \ref{g1}. Let $G_{n,k}:=K_n\vee G'_{n,k}$.
Obviously, $G_{n,k}$ is $(k+1)$-connected and contains $P_5\cup kP_1$ as an induced subgraph. As shown in \cite{San}, $\tau(G_{n,k})=2-\frac{n+3}{3(2n+1)+1}>\frac{3}{2}$ and it has no $2$-factor. So $G_{n,k}$ is a $\frac{3}{2}$-tough $(k+1)$-connected graph containing $P_5\cup kP_1$ as an induced subgraph that has no $2$-factor.

For positive integers $k$ and $n$ with $n\ge k$, let $G^*_{n,k}$ be the graph obtained from $(2n+1)K_{3}\cup K_{3(2n+1)}$ by adding a perfect matching (denoted by $M$) between vertices in $(2n+1)K_{3}$ and vertices in $K_{3(2n+1)}$ and subdividing each  edge  in $M$ twice, see Fig. \ref{g2}.  Let $\widehat{G}_{n,k}:=K_n\vee G^*_{n,k}$. Obviously, $\widehat{G}_{n,k}$ is $(k+2)$-connected and contains $P_7\cup kP_1$ ($P_6\cup kP_1$ as well) as an induced subgraph.
For $i=1,\dots,2n+1$, let $u_i$ be one vertex of the $i$th $K_3$ so that $\{u_1,\dots,u_{2n+1}\}$ is independent. Let $u_1v\in M$ and let $u$ the be subdividing vertex on $u_1v$ that is adjacent to $u_1$.
Let \[
W=V(K_n)\cup \left(V((2n+1)K_3)\setminus \{u_1,\dots,u_{2n+1}\}\right)\cup (V(K_{3(2n+1)})\setminus\{v\}).
\]
Then $c(\widehat{G}_{n,k}-W)=3(2n+1)+1$ and
\[
\tau(\widehat{G}_{n,k})=\frac{|W|}{c(\widehat{G}_{n,k}-W)}=2-\frac{n+3}{3(2n+1)+1}>\frac{3}{2}.
\]
Let $A=V(K_n)$ and $B$ be the set of vertices of degree $n+2$ having neighbors in $K_{3(2n+1)}$. Then
$A$ and $B$ are disjoint subsets of vertices of $G=\widehat{G}_{n,k}$ with $|B|=3(2n+1)$. Using the notation of Lemma \ref{tutte} (see below), one gets \[
\delta(A,B)=2n-6(2n+1)+6(2n+1)-(2n+2)=-2.
\]
So $\widehat{G}_{n,k}$ has no $2$-factor by Lemma \ref{tutte}.
Thus $\widehat{G}_{n,k}$ is a $\frac{3}{2}$-tough $(k+2)$-connected graph containing $P_7\cup kP_1$ ($P_6\cup kP_1$ as well) as an induced subgraph that has no $2$-factor.

\section{Preliminaries}

Let $A$ and $B$ be disjoint subsets of vertices of a graph $G$.
For $v\in V(G)$, denote by $e_G(v,B)$ ($e(v,B)$ for simplicity) the number of edges joining $v$ and a vertex of $B$. Let $e(A,B)=\sum_{v\in A}e(v,B)$.
For convenience, we write $e_G(F,B)$ ($e_G(v,F)$ for simplicity) for $e(V(F),B)$ if $F$ is a subgraph of $G$.
The component $H$ of $G-(A\cup B)$ is said to be  odd  (even, respectively) if $e(H,B)$ is odd (even, respectively).
Denote by $o(A,B)$ the number of odd components of $G-(A\cup B)$.
Let \[
\delta(A,B)=2|A|-2|B|+\sum_{v\in B}d_{G-A}(v)-o(A,B).
\]
Then $\delta(A,B)$ is even.
The following lemma due to Tutte gives a criterion for the existence of a $2$-factor.

\begin{lemma}\label{tutte}\cite{Tutte}
A graph $G$ has no $2$-factor if and only if $\delta(A,B)\le -2$ for some disjoint subsets $A,B\subseteq V(G)$.
\end{lemma}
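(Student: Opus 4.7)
The plan is to deduce the lemma from Tutte's general $f$-factor theorem by specializing to the constant function $f\equiv 2$. Recall the $f$-factor theorem: given $f:V(G)\to \mathbb{Z}_{\ge 0}$, the graph $G$ admits a spanning subgraph $F$ with $d_F(v)=f(v)$ for every $v$ if and only if, for all disjoint $A,B\subseteq V(G)$,
\[
\delta_f(A,B):=\sum_{v\in A}f(v)-\sum_{v\in B}f(v)+\sum_{v\in B}d_{G-A}(v)-q_f(A,B)\ge 0,
\]
where $q_f(A,B)$ counts the components $C$ of $G-(A\cup B)$ for which $e_G(V(C),B)+\sum_{v\in V(C)}f(v)$ is odd. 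A $2$-factor is precisely an $f$-factor with $f\equiv 2$.

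Substituting $f\equiv 2$, the quantity $\sum_{v\in V(C)}f(v)=2|V(C)|$ is always even, so the parity condition defining $q_f(A,B)$ collapses to the parity of $e_G(V(C),B)$. Hence $q_f(A,B)=o(A,B)$, and $\delta_f(A,B)$ reduces exactly to the $\delta(A,B)$ of the statement, so by the $f$-factor theorem, $G$ has a $2$-factor iff $\delta(A,B)\ge 0$ for all disjoint $A,B\subseteq V(G)$.

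Next I would verify that $\delta(A,B)$ is always even, which lets one rephrase ``$\delta(A,B)\ge 0$'' as ``not $\delta(A,B)\le -2$''. Decomposing the degree sum gives $\sum_{v\in B}d_{G-A}(v)=2\,e(G[B])+\sum_{C}e_G(V(C),B)$, the sum being taken over components $C$ of $G-(A\cup B)$, so modulo $2$ it agrees with $\sum_{C}e_G(V(C),B)$, and the latter has the same parity as $o(A,B)$ because even components contribute $0$ modulo $2$. Combined with $2|A|-2|B|$ being even, this shows $\delta(A,B)\equiv 0\pmod 2$; negating the $f$-factor criterion then yields the lemma as stated.

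The real work, and the main obstacle, is Tutte's $f$-factor theorem itself. The standard route replaces each vertex $v\in V(G)$ by a bipartite gadget $K_{d(v)-f(v),\,d(v)}$, turns each edge $uv\in E(G)$ into an edge between one ``port'' vertex in each of the two gadgets, and establishes a bijection between perfect matchings of the auxiliary graph $H$ and $f$-factors of $G$. Applying Tutte's $1$-factor theorem to $H$ and translating the resulting deficiency formula back to $G$---in particular recovering the correct parity condition on components of $G-(A\cup B)$ by tracking which gadget-level odd components arise---is the technical heart of the argument, but it is well documented and may be quoted here.
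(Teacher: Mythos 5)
Your derivation is correct: the lemma is exactly the $f\equiv 2$ specialization of Tutte's $f$-factor theorem, your parity computation showing $\delta(A,B)$ is even (so that $\delta(A,B)<0$ may be sharpened to $\delta(A,B)\le -2$) is sound, and the gadget reduction you quote for the $f$-factor theorem itself is the standard one. This matches the paper, which states the lemma without proof as a direct citation of Tutte's factor theorem, so your proposal simply fills in the routine specialization the citation presupposes.
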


A pair of disjoint subsets $A,B\subseteq V(G)$ with $\delta_G(A,B)\le -2$ is called a barrier  (or Tutte pair \cite{BvS})  of $G$. By Lemma \ref{tutte}, if $G$ has no $2$-factor, then $G$ has a barrier. A biased barrier of $G$ is   a barrier of $G$ such that over all  barriers of $G$, (i) $|A|$ is maximum, and (ii) subject to (i), $|B|$ is minimum.

Let $G$ be a graph with no $2$-factor and $(A,B)$ be a biased barrier of $G$.
For a nonnegative integer $s$, let
\[
\mathcal{C}_s=\{H:\mbox{$H$ is a component of $G-(A\cup B)$ with $e(H,B)=s$}\}.
\]
For $u\in B$, let
\[
o(u)=\left|\left\{H\in \bigcup_{t\ge 1}\mathcal{C}_{2t+1}:e(u,H)=1 \right\}\right|.
\]

Kanno and Shan \cite{KS}  establish the following  fundamental properties of a biased barrier, see also
\cite{GJS}.

\begin{lemma}\label{AB}\cite{KS}
Let $G$ be a graph with no $2$-factor and $(A,B)$ be a biased barrier of $G$. Then
\begin{enumerate}
\item[(i)]
$B$ is independent in $G$.

\item[(ii)]
If $H$ is an even component of $G-(A\cup B)$, then $e(H,B)=0$.

\item[(iii)]
If $H$ is an odd component of $G-(A\cup B)$, then $e(v,H)\le 1$ for any $v\in B$.

\item[(iv)]
If $H$ is an odd component of $G-(A\cup B)$, then $e(v,B)\le 1$ for any $v\in H$.

\item[(v)]
$|B|\ge |A|+\sum_{t\ge 1} t|\mathcal{C}_{2t+1}|+1$.
\end{enumerate}
\end{lemma}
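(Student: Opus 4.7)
The plan is to prove (i)--(iv) by a common contradiction argument: if the target property fails, perform a local surgery on $(A,B)$ to construct another Tutte pair $(A',B')$ that beats $(A,B)$ in the lexicographic order---maximize $|A|$, then minimize $|B|$---contradicting biasedness. Part (v) then follows from (ii) by a short algebraic rearrangement of the defining identity of $\delta(A,B)$.

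For (i), I would assume $u,v\in B$ with $uv\in E(G)$ and consider $(A',B')=(A\cup\{u\},B\setminus\{u\})$. The change $\delta(A',B')-\delta(A,B)$ splits into four pieces: $2|A|-2|B|$ jumps by $+4$; the degree sum loses $d_{G-A}(u)+e_G(u,B\setminus\{u\})$, where $e_G(u,B\setminus\{u\})\ge 1$ is forced by the edge $uv$; and $o(\cdot,\cdot)$ can shift by at most the number of components $H$ of $G-(A\cup B)$ with $e_G(u,H)$ odd, a quantity bounded by $d_{G-A}(u)-e_G(u,B\setminus\{u\})$. Assembling these bounds and invoking the parity of $\delta$ should yield $\delta(A',B')\le-2$, contradicting the maximality of $|A|$.

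Granted (i), so that $B$ is independent, I would run the same template for (ii)--(iv) with the surgery tailored to the property being violated. For (iii), if some $v\in B$ satisfies $e_G(v,H)\ge 2$ for an odd component $H$, the natural move is to push $v$ into $A$; the surplus $+4$ from $2|A|-2|B|$ must absorb the drop $-d_{G-A}(v)$ from the degree sum together with any adverse parity flips, and the fact that $H$ is odd with $e_G(v,H)\ge 2$ keeps the flip count under control. Part (iv) is dual, pushing a vertex of $H$ into $A$. Part (ii) is driven by an even component sending edges to $B$: the same accounting then produces a Tutte pair with the same $|A|$ but strictly smaller $|B|$. The hard part in every case is the simultaneous perturbation of $|A|$, $|B|$, $\sum_{v\in B}d_{G-A}(v)$, and $o(A,B)$; the evenness of $\delta$ is the indispensable tool converting ambiguous $\le 0$ estimates into the required $\le -2$.

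Finally, (v) is immediate from (ii). Since every component $H$ of $G-(A\cup B)$ with $e_G(H,B)>0$ must be odd, it lies in some $\mathcal{C}_{2t+1}$, $t\ge 0$, and hence
\[
\sum_{v\in B}d_{G-A}(v)=\sum_{H}e_G(H,B)=\sum_{t\ge 0}(2t+1)|\mathcal{C}_{2t+1}|,\qquad o(A,B)=\sum_{t\ge 0}|\mathcal{C}_{2t+1}|,
\]
so $\sum_{v\in B}d_{G-A}(v)-o(A,B)=2\sum_{t\ge 1}t|\mathcal{C}_{2t+1}|$. Substituting into $\delta(A,B)\le-2$ and rearranging gives $|B|\ge|A|+\sum_{t\ge 1}t|\mathcal{C}_{2t+1}|+1$, which is exactly (v).
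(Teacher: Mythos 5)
The paper itself gives no proof of this lemma (it is quoted from Kanno--Shan \cite{KS}), so your attempt stands on its own. Your overall template -- local surgery on $(A,B)$, extremality of the biased barrier, parity of $\delta$ -- is indeed the standard route, and your derivation of (v) is correct, except that it uses (i) as well as (ii): the identity $\sum_{v\in B}d_{G-A}(v)=\sum_{H}e_G(H,B)$ holds only because $B$ has no internal edges. The genuine gap is in the surgeries you choose for (i) and (iii). For (i), assembling exactly the bounds you state for the move of $u$ into $A$ gives $\delta(A\cup\{u\},B\setminus\{u\})\le\delta(A,B)+4-2e_G(u,B\setminus\{u\})\le\delta(A,B)+2\le 0$, and parity cannot upgrade $\le 0$ to $\le -2$, since $0$ is even; in the tight situation ($\delta(A,B)=-2$, $e_G(u,B\setminus\{u\})=1$, every component meeting $u$ doing so by a single edge and being odd only through that edge) your estimate gives $\delta=0$, so the new pair need not be a barrier and no contradiction with the maximality of $|A|$ results. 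The same arithmetic defeats your plan for (iii): once $B$ is independent, pushing $v\in B$ into $A$ again yields only $\delta\le\delta(A,B)+2\le 0$.

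The standard repair is to delete the offending vertex of $B$ and leave $A$ untouched. If $u\in B$ has a neighbour in $B$ (case (i)), a neighbour in an even component (case (ii)), or at least two neighbours in a single odd component (case (iii)), set $B'=B\setminus\{u\}$. The components of $G-(A\cup B')$ are those of $G-(A\cup B)$ not meeting $u$, plus one component absorbing $u$, so $o(A,B')\ge o(A,B)-o_u$, where $o_u$ is the number of odd components meeting $u$; in each of the three cases $d_{G-A}(u)\ge o_u+1$, whence $\delta(A,B')\le\delta(A,B)+2-d_{G-A}(u)+o_u\le\delta(A,B)+1\le -1$, hence $\le -2$ by evenness. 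This contradicts the minimality of $|B|$ (note $|A|$ is unchanged, so it is still maximum), which is exactly where the lexicographic choice is used. Your sketch for (iv) is the correct move and does close: adding a vertex $v$ of the odd component $H$ with $e(v,B)\ge 2$ to $A$ destroys at most the one odd component $H$ while any newly created components can only increase $o$, so $\delta(A\cup\{v\},B)\le\delta(A,B)+2-e(v,B)+1\le -1$, hence $\le -2$ by parity, contradicting the maximality of $|A|$. But since for (ii)--(iv) you only gesture at the accounting rather than carry it out, and the accounting you do spell out (for (i)) does not reach $\le -2$, the proposal as written is a plan with one wrong turn rather than a proof.
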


The following lemma is a particular case of \cite[Lemma 10]{GJS}.

\begin{lemma}\label{odd}\cite{GJS}
Let $G$ be a $1$-tough graph on at least three vertices with no $2$-factor and $(A,B)$ be a biased barrier of $G$. Then $\bigcup_{t\ge 1}\mathcal{C}_{2t+1}\ne \emptyset$.
\end{lemma}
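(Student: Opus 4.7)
The plan is to argue by contradiction. Suppose $\bigcup_{t\ge 1}\mathcal{C}_{2t+1}=\emptyset$, so every odd component of $G-(A\cup B)$ lies in $\mathcal{C}_1$. By Lemma \ref{AB}(v), this forces $|B|\ge |A|+1$. Since $G$ has no $2$-factor and $|V(G)|\ge 3$, $G$ cannot be complete (every $K_n$ with $n\ge 3$ has a Hamilton cycle, hence a $2$-factor), so the $1$-toughness hypothesis applies in the usable form $|S|\ge c(G-S)$ for every cut set $S$.

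The key step is to evaluate $c(G-A)$ exactly. By Lemma \ref{AB}(ii), each even component of $G-(A\cup B)$ lies in $\mathcal{C}_0$ and has no edge to $B$, so it survives intact as a component of $G-A$. Each $H\in\mathcal{C}_1$ is attached, via its unique edge to $B$, to exactly one vertex of $B$. Since $B$ is independent (Lemma \ref{AB}(i)) and since each $H\in\mathcal{C}_1$ touches only a single vertex of $B$, no two vertices of $B$ can lie in a common component of $G-A$. Therefore
\[
c(G-A)=|\mathcal{C}_0|+|B|.
\]

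Next I split on whether $A$ is empty. If $A\ne\emptyset$, the bound $|B|\ge|A|+1\ge 2$ gives $c(G-A)\ge 2$, so $A$ is a genuine cut set of $G$; then $1$-toughness delivers $|A|\ge c(G-A)\ge |B|\ge |A|+1$, a contradiction. If $A=\emptyset$, then $1$-toughness forces $G$ to be connected, so $|\mathcal{C}_0|+|B|=1$, which yields $|\mathcal{C}_0|=0$ and $B=\{v\}$ for a single vertex $v$. In this configuration every edge incident to $v$ is a single edge to some $H\in\mathcal{C}_1$, so $d_G(v)=|\mathcal{C}_1|$.

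The main obstacle is to rule out this degenerate configuration by a small-case analysis on $|\mathcal{C}_1|$. If $|\mathcal{C}_1|\ge 2$, then $G-v$ splits into $|\mathcal{C}_1|\ge 2$ components, so $\{v\}$ is a cut set and $1$-toughness is violated. If $|\mathcal{C}_1|=1$, then $v$ is a pendant vertex in a graph on at least three vertices, so its unique neighbor $u$ satisfies $c(G-u)\ge 2$, again violating $1$-toughness. And $|\mathcal{C}_1|=0$ contradicts $|V(G)|\ge 3$ outright. Thus every case yields a contradiction, completing the proof; everything beyond this endgame is a direct combination of Lemma \ref{AB} with the toughness bound on $A$.
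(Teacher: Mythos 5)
Your proof is correct: the identity $c(G-A)=|\mathcal{C}_0|+|B|$ follows exactly as you argue from Lemma \ref{AB}(i)--(ii) and the assumption that every odd component lies in $\mathcal{C}_1$, and combined with $|B|\ge |A|+1$ from Lemma \ref{AB}(v) the $1$-toughness bound on the cut set $A$ yields the contradiction, with the degenerate case $A=\emptyset$ (and the subcases $|\mathcal{C}_1|=0,1,\ge 2$) handled properly. Note that the paper does not prove this lemma itself but quotes it as a special case of \cite[Lemma 10]{GJS}; your argument is the natural self-contained one and uses the same cut-set-plus-toughness counting that the paper employs elsewhere (e.g.\ in Claim \ref{5c} and in the proof of Theorem \ref{inde2}), so it is essentially the standard approach rather than a genuinely different route.
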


Let $P$ by a path from $u$ to $v$. Denote by  $P^{-1}$ the path obtained by reversing $P$.
If $Q$ is a path from $v$ to $w$ and $v$ is the only common vertex of $P$ and $Q$, then
$PQ$ denotes a path
between from $u$  to $w$ passing through $P$ and $Q$.

\section{Proof of Theorem \ref{inde2}}

\begin{lemma}\label{inequality}
Suppose that $x\ge y>0$ and $t$ is a positive integer.
If $a_0,\dots,a_t\ge 2$ with $a_0\le \max_{1\le i\le t}a_i$, then $\frac{x+a_0-2+t}{y+\sum_{i=1}^ta_i-t}\le \frac{x}{y}$. 
\end{lemma}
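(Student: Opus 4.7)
The plan is to reduce the claim to a cleaner arithmetic inequality by cross-multiplication, then exploit the bounds $a_i\ge 2$ and $a_0\le\max_{1\le i\le t}a_i$.

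First I would verify that the denominator on the left is positive: since $a_i\ge 2$ for $1\le i\le t$, we have $\sum_{i=1}^t a_i - t\ge t\ge 1$, so $y+\sum_{i=1}^t a_i - t>0$. Hence $\frac{x+a_0-2+t}{y+\sum_{i=1}^t a_i-t}\le \frac{x}{y}$ is equivalent to
\[
y(a_0+t-2)\le x\Bigl(\sum_{i=1}^t a_i-t\Bigr).
\]

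Next, I would estimate the right-hand sum. Let $M=\max_{1\le i\le t}a_i$. Since each $a_i\ge 2$,
\[
\sum_{i=1}^t a_i \ge M + 2(t-1),
\]
so $\sum_{i=1}^t a_i - t \ge M+t-2 \ge a_0+t-2$, where the last step uses the hypothesis $a_0\le M$.

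Finally, I would combine this with $x\ge y>0$ and the fact that $a_0+t-2\ge 0$ (as $a_0\ge 2$ and $t\ge 1$). These together yield
\[
x\Bigl(\sum_{i=1}^t a_i - t\Bigr) \ge x(a_0+t-2) \ge y(a_0+t-2),
\]
which is exactly the required inequality. The argument is entirely elementary; no step appears to be a genuine obstacle, the only care needed is keeping track of signs so that the cross-multiplication and monotonicity steps are valid, which the bounds $a_i\ge 2$ and $t\ge 1$ ensure.
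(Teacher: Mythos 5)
Your proof is correct and takes essentially the same route as the paper's: cross-multiply (the denominators being positive) and use that each $a_i\ge 2$ forces $\sum_{i=1}^t(a_i-2)\ge \max_{1\le i\le t}a_i-2\ge a_0-2$, then combine with $x\ge y>0$. If anything, your chain $x\bigl(\sum_{i=1}^t a_i-t\bigr)\ge x(a_0+t-2)\ge y(a_0+t-2)$ is cleaner than the paper's intermediate inequality $t(a_0-2)y\le \sum_{i=1}^t(a_i-2)x$, which as written carries a superfluous factor $t$ (it fails, e.g., for $t=2$, $a_0=a_1=3$, $a_2=2$, $x=y=1$), whereas dropping that factor yields exactly your argument; you also explicitly check positivity of the left-hand denominator, which the paper leaves implicit.
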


\begin{proof}
As $2\le a_0\le \max_{1\le i\le t}a_i$ and $0<y\le x$, we have
\[
t(a_0-2)y\le \sum_{i=1}^t(a_i-2)x,
\]
so
\[
(a_0-2+t)y\le t(a_0-2)y+tx\le \left(\sum_{i=1}^ta_i-t\right)x.
\]
That is,
\[
(x+a_0-2+t)y\le \left(y+\sum_{i=1}^ta_i-t\right)x. \qedhere
\]
\end{proof}

\begin{proof}[Proof of Theorem \ref{inde2}]
Suppose by contradictory that $G$ contains no $2$-factor.
Then we have by Lemma \ref{tutte} that $G$ contains a barrier. Let $(A,B)$ be a biased barrier of $G$.
Let $\delta=\delta(G)$ and $\alpha=\alpha(G)$. Then $\delta\ge \epsilon\alpha$, and
 by Lemma \ref{AB}(i), $|B|\le \alpha$.

For $u\in B$, let \[
h_G(u)=\left|\left\{H\in \bigcup_{t\ge 0}\mathcal{C}_{2t+1}:e_G(u,H)=1 \right\}\right|.
\]	
It follows from (i) and (iii) of Lemma \ref{AB} that
\begin{equation}\label{eq0}
|A|\ge \delta-\min_{u\in B}h_G(u)\ge \epsilon\alpha-\min_{u\in B}h_G(u)\ge \epsilon|B|-\min_{u\in B}h_G(u).
\end{equation}
So we have by Lemma \ref{AB}(v) that
\begin{equation}\label{eq}
\sum_{t\ge 1}t|\mathcal{C}_{2t+1}|\le |B|-|A|-1\le 
(1-\epsilon)|B|+\min_{u\in B}h_G(u)-1.
\end{equation}

Suppose first that $\max_{u\in B}h_G(u)\le 1$.
Then $\min_{u\in B}h_G(u)\le 1$.
For each $H\in \mathcal{C}_{2t+1}$ for $t\ge 1$,
Lemma \ref{AB}(iv) ensures that there is a set $W_H$ of arbitrarily chosen $2t$ vertices of $H$ with neighbors in $B$.
Then there is exactly one edge between $V(H)\setminus W_H$ and $B$. Let $W=A\cup \bigcup_{H\in\bigcup_{t\ge 1}\mathcal{C}_{2t+1}}W_H$. Then $W$ is a cut set of $G$ with $c(G-W)\ge |B|$. Hence, by Lemma \ref{AB}(v)  and Eq.~\eqref{eq},
\begin{align*}
\tau(G)&\le \frac{|W|}{c(G-W)}\\
&\le  \frac{|A|+\sum_{t\ge 1}2t|\mathcal{C}_{2t+1}|}{|B|}\\
&\le \frac{|B|+\sum_{t\ge 1}t|\mathcal{C}_{2t+1}|-1}{|B|}\\
&\le  \frac{|B|+(1-\epsilon)|B|+\min_{u\in B}h_G(u)-2}{|B|}\\
&\le\frac{(2-\epsilon)|B|-1}{|B|}\\
&<2-\epsilon,
\end{align*}
contradicting the fact that $G$ is $(2-\epsilon)$-tough.

Suppose next that $\max_{u\in B}h_G(u)\ge 2$.

Let $G_1=G$.
%
Suppose that $G_i$ is defined and $\max_{u\in B} h_{G_i}(u)\ge 1$.
Let $u_i\in B$ be the vertex with $h_{G_i}(u_i)=\max_{u\in B} h_{G_i}(u)$ and let $r_i=h_{G_i}(u_i)$.
Denote by $H_{1}^{(i)},\dots, H_{r_i}^{(i)}$ the components of $G_i-(A\cup B)$ containing a vertex adjacent to $u_i$. 
For $j=1,\dots,r_i$, let \[
W_j^{(i)}=\left\{v\in V(H_j^{(i)}): e(v,B)=1,u_iv\notin E(G)\right\}.
\]
Let $G_{i+1}=G_i-\bigcup_{j=1}^{r_i}V(H_j^{(i)})$.
Repeating this process, we obtain a graph sequence $G_1,\dots, G_s$ with $s\le |B|$ such that
$\max_{u\in B} h_{G_s}(u)=0$.
Let
\[
\ell=\left|\left\{G_i:\max_{u\in B}h_{G_i}(u)\ge 1\right\}\right|
\mbox{ and }
\ell'=\left|\left\{G_i:\max_{u\in B}h_{G_i}(u)\ge 2\right\}\right|.
\]
Then $r_i=1$ if $\ell'+1\le i\le \ell$.
Let
\[
W=A\cup \{u_1,\dots, u_{\ell'}\}\cup \bigcup_{i=1}^{\ell}\bigcup_{j=1}^{r_i}W_j^{(i)}.
\]
From the construction above, any odd component of $G-(A\cup B)$ is of the form $H_j^{(i)}$ for some $i=1,\dots,\ell$ and $j=1,\dots,r_i$.
For each $i=1,\dots,\ell$ and $j=1,\dots,r_i$, $H_j^{(i)}$ is an odd component of $G-(A\cup B)$ by  Lemma \ref{AB}(ii), and $|W_j^{(i)}|=e(H_j^{(i)},B)-1$.
So \[
|W|=|A|+\ell'+\sum_{t\ge 1}2t|\mathcal{C}_{2t+1}|.
\]
As $u_i\in W$ for $i=1,\dots,\ell'$, no two vertices of $B\setminus \{u_i:i=1\dots,\ell'\}$ can be in the same component of $G-W$. 
As there is no edges between $V(H_j^{(i)})\setminus W_j^{(i)}$ and $B\setminus\{u_1,\dots,u_\ell'\}$ for $j=1,\dots,r_i$ and $i=1,\dots,\ell'$ and exactly one edge between $V(H_1^{(i)})\setminus W_1^{(i)}$ and $B\setminus\{u_1,\dots,u_\ell'\}$ if $\ell'+1\le i\le \ell$,
$W$ is a cut set of $G$ with $c(G-W)\ge |B|-\ell'+\sum_{i=1}^{\ell'}h_{G_i}(u_i)$.
Therefore, by Lemma \ref{AB}(v), Eq.~\eqref{eq} again and Lemma \ref{inequality},
\begin{equation}\label{mm}
\begin{aligned}
\tau(G)&\le \frac{|A|+\ell'+\sum_{t\ge  1}2t|\mathcal{C}_{2t+1}|}{|B|-\ell'+\sum_{i=1}^{\ell'}h_{G_i}(u_i)}\\
&\le \frac{|B|+\sum_{t\ge 1}t|\mathcal{C}_{2t+1}|-1+\ell'}{|B|+\sum_{i=1}^{\ell'}h_{G_i}(u_i)-\ell'}\\
&\le \frac{(2-\epsilon)|B|+\min_{u\in B} h_G(u)-2+\ell'}{|B|+\sum_{i=1}^{\ell'}h_{G_i}(u_i)-\ell'}\\
&\le \frac{(2-\epsilon)|B|+\max_{u\in B} h_G(u)-2+\ell'}{|B|+\sum_{i=1}^{\ell'}h_{G_i}(u_i)-\ell'}\\
&\le 2-\epsilon.
\end{aligned}
\end{equation}
Suppose that $\tau(G)= 2-\epsilon$. Then each inequality in \eqref{mm}  is an equality.
As the third inequality is an equality,
Eq.~\eqref{eq} is an equality, so \eqref{eq0} is an equality, implying that
 $|B|=\alpha$. As $\max_{u\in B} h_G(u)\ge 2$, there are two vertices, say $w_1$ and $w_2$ in $G-(A\cup B)$ adjacent to $u_1$.
As $w_1$ and $w_2$ lie in different components of $G-(A\cup B)$ by Lemma \ref{AB}(iii), they are not adjacent. Moreover, they have no neighbors in $B\setminus\{u_1\}$ by Lemma \ref{AB}(iv).
Thus $B\setminus \{u_1\}\cup \{w_1,w_2\}$ is an independent set with $\alpha+1$ vertices, which is a contradiction.
It follows that  $\tau(G)<2-\epsilon$, contradicting the fact that $G$ is $(2-\epsilon)$-tough.
This completes the proof.
\end{proof}

\section{Proof of Theorem \ref{t1}}

\begin{proof}[Proof of Theorem \ref{t1}]
Suppose to the contrary that $G$ contains no $2$-factor. Then we have by Lemma \ref{tutte} that $G$ contains a barrier. Let $(A,B)$ be a biased barrier of $G$.

\begin{claim}\label{c3}
$|B|\ge k+\ell$.	
\end{claim}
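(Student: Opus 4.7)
The plan is to argue by contradiction: assume $|B|\le k+\ell-1$ (interpreting $\ell$ appropriately for part~(ii), where the relevant bound is $|B|\ge k+2$) and try to exhibit the forbidden induced subgraph in $G$, thereby contradicting the forbidden-forest hypothesis. By Lemma~\ref{odd}, there is an odd component $H$ of $G-(A\cup B)$ with $e(H,B)=2t+1\ge 3$. Combining Lemma~\ref{AB}(iii) and (iv) yields a set $U_H\subseteq V(H)$ of size $2t+1$ together with a matching set $B_H\subseteq B$ of the same size, related by a bijection $u\mapsto b(u)$ such that $ub(u)\in E(G)$ and $ub(u')\notin E(G)$ whenever $u\ne u'$.

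To build the induced $P_{2\ell}$ (or $P_3$) portion I would exploit the connectivity of $H$: for any two distinct $u_i,u_j\in U_H$ there is a path in $H$ joining them, and prepending/appending the edges $b_iu_i$ and $u_jb_j$ produces an induced path because, by Lemma~\ref{AB}(iii), neither $b_i$ nor $b_j$ has any other neighbor in $V(H)$. For example, if some pair $u_i,u_j\in U_H$ is adjacent in $H$, then $b_iu_iu_jb_j$ is an induced $P_4$; otherwise a shortest $u_i$-to-$u_j$ path in $H$ gives a longer induced path from which a $P_{2\ell}$ (or $P_3$) can be extracted as an initial segment. For the $kP_1$ part, the natural candidates are the leftover vertices of $B$, which are independent (Lemma~\ref{AB}(i)) and non-adjacent to the chosen $u_i$'s, since each $u_i$ has only one neighbor in $B$.

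The main obstacle is that the assumed upper bound on $|B|$ leaves only $|B|-2$ (or $|B|-1$ for part~(ii)) leftover $B$-vertices, which is strictly fewer than $k$, so the $kP_1$ component cannot be built entirely inside $B$. To cover the shortfall I would use additional vertices drawn from outside $B$ that are automatically non-adjacent to $B$ and to the constructed path: natural candidates include vertices of $V(H)\setminus U_H$, vertices of other odd components, or vertices of even components (the last being non-adjacent to $B$ by Lemma~\ref{AB}(ii), and all others automatically non-adjacent to the path endpoints because they lie in different components of $G-(A\cup B)$). If no such extras are available, the structure of $G-(A\cup B)$ becomes so rigid that a cut set of size strictly less than the connectivity $k+\ell-1$ (resp.\ $k+1$) separates $G$: the cut is built out of $A$, a small collection of $U_H$-vertices used to disconnect $H$ from the rest of $G$, and selected $B$-vertices, giving the desired contradiction with the $(k+\ell-1)$-connectivity (resp.\ $(k+1)$-connectivity) hypothesis. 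Balancing the forbidden-subgraph argument when auxiliary vertices are abundant against the connectivity argument when $G-(A\cup B)$ is sparse is the technical crux of the claim.
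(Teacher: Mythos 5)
There is a genuine gap: your proposal never actually closes either branch of the dichotomy it sets up, and it misses the one tool that makes the claim provable at all, namely Lemma~\ref{AB}(v). The paper's proof of Claim~\ref{c3} does not use the forbidden-forest hypothesis in any way; it is a pure counting argument. One finds a small cut set ($A\cup\{v\}$ when some component has exactly one edge to $B$, or $A\cup N_{G-A}(w)$ for $w\in B$ when every odd component sends at least $3$ edges to $B$), which by $(k+\ell-1)$-connectivity gives a lower bound on $|A|$ (in the second case, a lower bound on $|A|+o(w)$), and then Lemma~\ref{AB}(v), $|B|\ge |A|+\sum_{t\ge1}t|\mathcal{C}_{2t+1}|+1$, converts this into $|B|\ge k+\ell$ because $o(w)\le\sum_{t\ge1}t|\mathcal{C}_{2t+1}|$. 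Without invoking Lemma~\ref{AB}(v) you have no mechanism to relate $|A|$ (which is what connectivity controls) to $|B|$ (which is what the claim is about), and your sketch of the ``rigid'' case --- a cut built from $A$, some $U_H$-vertices and some $B$-vertices of size below $k+\ell-1$ --- is not substantiated by any estimate.

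The forbidden-subgraph branch is also not salvageable as stated. Under the assumption $|B|\le k+\ell-1$ you correctly note that $B$ alone cannot supply the $kP_1$ part, but the auxiliary vertices you propose do not work in general: vertices of $V(H)\setminus U_H$ lie in the same component as your path and may be adjacent to its interior; a vertex of another odd component may be adjacent to the $B$-endpoints $b_i,b_j$ of the path (Lemma~\ref{AB}(iii) only forbids a $B$-vertex from having two neighbors in one odd component, not neighbors in several components); and several vertices taken from one even or odd component need not be pairwise non-adjacent. Nothing in the hypotheses guarantees that enough pairwise non-adjacent such vertices exist --- indeed they may not, which is exactly why the claim must be proved from connectivity and the barrier inequality of Lemma~\ref{AB}(v) rather than from the forbidden-forest condition. (The induced-path construction you describe inside an odd component is essentially what the paper does \emph{after} Claim~\ref{c3}, once $|B|\ge k+\ell$ guarantees the $kP_1$ part inside $B$; it is not a route to the claim itself.)
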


\begin{proof}
Suppose first that $\mathcal{C}_1\ne \emptyset$. Assume that $H\in \mathcal{C}_1$. That is, $H$ is a component of $G-(A\cup B)$ with
$e(H,B)=1$. Let $uv$ be the unique edge between $V(H)$ and $B$ with $u\in V(H)$ and $v\in B$.
By Lemma \ref{odd}, $\mathcal{C}_{2t+1}\ne \emptyset$ for some $t\ge 1$, so there exists a component of $G-(A\cup B)$ different from $H$. So $A\cup \{v\}$ is a cut set of $G$.
As $G$ is $(k+\ell-1)$-connected, one gets $|A|\ge k+\ell-2$. So, by Lemma \ref{AB}(v), $|B|\ge |A|+\sum_{t\ge 1}t|\mathcal{C}_{2t+1}|+1\ge |A|+2\ge k+\ell$, as desired.

Suppose next that $\mathcal{C}_1=\emptyset$.
Let $w\in B$. By Lemma \ref{AB}(i)--(iii), $o(w)=d_{G-A}(w)$, so
$A\cup N_{G-A}(w)$ is a cut set of $G$, implying that
$|A|\ge k+\ell-1-o(w)$, as $G$ is $(k+\ell-1)$-connected.
By the definition of $o(w)$, we have \[
o(w)\le \sum_{t\ge 1}|\mathcal{C}_{2t+1}|\le \sum_{t\ge 1}t|\mathcal{C}_{2t+1}|.
\]	
So, by Lemma \ref{AB}(v), \[
|B|\ge |A|+\sum_{t\ge 1}t|\mathcal{C}_{2t+1}|+1\ge k+\ell-1-o(w)+\sum_{t\ge 1}t|\mathcal{C}_{2t+1}|+1\ge k+\ell,
\]
as desired.
\end{proof}

Suppose that $\ell=1$. Let $uv$ be an edge between $V(H)$ and $B$ with $u\in V(H)$ and $v\in B$.
By  (i) and (iv) of Lemma \ref{AB}, $B$ is an independent set and $u$ is not adjacent to any vertex in $B\setminus\{v\}$.
Thus  $G[\{u\}\cup B]\cong P_2\cup (|B|-1)P_1$, so, by Claim \ref{c3},  $G$ contains  $P_2\cup kP_1$ as an induced subgraph, which is a contradiction.

Suppose next that $\ell=2$.
By Lemma \ref{odd}, $\mathcal{C}_{2t+1}\ne \emptyset$ for some $t\ge 1$. So there exists an odd component, say $H'$, in  $G-(A\cup B)$ with $e(H', B)\ge 3$.
By (iii) and (iv) of Lemma \ref{AB}, every vertex of $V(H')$ is adjacent to at most one vertex in $B$ and and vice versa. So, among the vertices of $V(H')$ with  a neighbor in $B$, there are two vertices, say
$u_1$ and $u_2$, such that $u_iv_i\in E(G)$ for $v_i\in B$ with $i=1,2$ and
any internal  vertex on the shortest path  $P$  from $u_1$ to $u_2$  in $H'$ (if any exists) has no neighbors in $B$. Let $P'$ be the path $v_1u_1Pu_2v_2$.
By Lemma \ref{AB}(i), $B$ is an independent set.
So $G[V(P')\cup B]=P'\cup (|B|-2)P_1$.
  Note that $|V(P')|\ge 4$. It then follows from Claim \ref{c3} that $G$ contains $P_4\cup kP_1$ as an induced subgraph, a contradiction. This proves Item (i).

For Item (ii),  as is noted above, if $G$ is $(k+2)$-connected, then $G$ contains an induced $P_4\cup kP_1$, which contains $P_3\cup kP_1$ as an induced subgraph, also a contradiction.
\end{proof}

\section{Proof of Theorem \ref{p5}}

\begin{proof}[Proof of Theorem \ref{p5}]
Suppose to the contrary that $G$ contains no $2$-factor.  Then we have by Lemma \ref{tutte} that $G$ contains a barrier. Let $(A,B)$ be a biased barrier of $G$.

\begin{claim}\label{5c}
There exists some vertex $u\in B$ with $o(u)\ge 2$.
\end{claim}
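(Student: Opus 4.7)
The plan is to prove Claim \ref{5c} by contradiction: assume $o(u)\le 1$ for every $u\in B$, and exhibit a cut set $W$ with $|W|/c(G-W)<\frac{3}{2}$, contradicting the $\frac{3}{2}$-toughness of $G$.

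First I would establish a double-counting identity. By Lemma \ref{AB}(iii), $e(u,H)\in\{0,1\}$ for every $u\in B$ and every odd component $H$, so counting the edges between $B$ and $\bigcup_{t\ge 1}\mathcal{C}_{2t+1}$ from both sides gives
\[
\sum_{u\in B}o(u)=\sum_{t\ge 1}(2t+1)|\mathcal{C}_{2t+1}|.
\]
The contradiction hypothesis forces the left side to be at most $|B|$; combined with Lemma \ref{odd}, which ensures $\sum_{t\ge 1}|\mathcal{C}_{2t+1}|\ge 1$, this yields
\[
\sum_{t\ge 1}t|\mathcal{C}_{2t+1}|=\frac{1}{2}\sum_{t\ge 1}\bigl((2t+1)-1\bigr)|\mathcal{C}_{2t+1}|\le \frac{|B|-1}{2}.
\]

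Next I would construct the cut set. By Lemma \ref{AB}(iv), each $H\in\mathcal{C}_{2t+1}$ contains exactly $2t+1$ vertices with a neighbor in $B$; pick $W_H$ to be any $2t$ of them and set
\[
W=A\cup\bigcup_{t\ge 1}\bigcup_{H\in\mathcal{C}_{2t+1}}W_H.
\]
Then $|W|=|A|+\sum_{t\ge 1}2t|\mathcal{C}_{2t+1}|$, and substituting Lemma \ref{AB}(v) together with the previous bound gives
\[
|W|\le \left(|B|-1-\sum_{t\ge 1}t|\mathcal{C}_{2t+1}|\right)+2\sum_{t\ge 1}t|\mathcal{C}_{2t+1}|\le \frac{3(|B|-1)}{2}.
\]

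The main obstacle is to show $c(G-W)\ge |B|$, i.e.\ that each vertex of $B$ occupies its own component of $G-W$. Since $B$ is independent by Lemma \ref{AB}(i), any path in $G-W$ between two distinct vertices of $B$ must pass through the surviving portion of some component $H$ of $G-(A\cup B)$. For $H\in\mathcal{C}_0\cup\mathcal{C}_1$ this is impossible because $H$ has at most one $B$-neighbor by definition; for $H\in\mathcal{C}_{2t+1}$ with $t\ge 1$, after deleting $W_H$ only a single vertex $v_H\in V(H)\setminus W_H$ retains an edge to $B$, and by Lemma \ref{AB}(iv) this $v_H$ is adjacent to exactly one element of $B$. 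Hence no such path exists. Noting $|B|\ge 3$ (from Lemma \ref{odd} and $e(H,B)\ge 3$ for the relevant $H$), $W$ is a genuine cut set, and assembling the bounds yields
\[
\tau(G)\le \frac{|W|}{c(G-W)}\le \frac{3(|B|-1)}{2|B|}<\frac{3}{2},
\]
the desired contradiction.
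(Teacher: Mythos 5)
Your proof is correct and follows essentially the same route as the paper's: the same contradiction hypothesis $o(u)\le 1$, the same cut set $W=A\cup\bigcup_H W_H$, the same use of Lemma \ref{AB}(iii)--(v), Lemma \ref{odd}, and the $\frac{3}{2}$-toughness. The only difference is bookkeeping --- you bound $\sum_{t\ge 1}t|\mathcal{C}_{2t+1}|$ by double counting and contradict the toughness bound directly, while the paper partitions $B$ into $B_0,B_1$ and contradicts Lemma \ref{AB}(v); these are algebraically equivalent rearrangements.
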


This is  Claim 4 in the proof of Theorem 13 in  \cite{GJS}, for completeness, however,  we include a proof here.

\begin{proof}
Suppose by contradiction that $o(u)\le 1$ for any $u\in B$.
Let \[
B_0=\{u\in B:o(u)=0\}
\]
and \[
B_1=\{u\in B:o(u)=1\}.
\]
Then $B=B_0\cup B_1$ and $|B_1|=\sum_{t\ge 1}(2t+1)|\mathcal{C}_{2t+1}|$ by (iii) and (iv) of Lemma \ref{AB}.
For each $H\in \mathcal{C}_{2t+1}$ for $t\ge 1$, let $W_H$ be a set of arbitrarily chosen $2t$ vertices having neighbors in $B$. Then there is exactly one edge between $V(H)\setminus W_H$ and  $B$.  Let $W=A\cup \bigcup_{H\in\bigcup_{t\ge 1}\mathcal{C}_{2t+1}}W_H$. As $o(u)\le 1$  for any $u\in B$, no two vertices of $B$ can be in the same component of $G-W$, so $W$ is a cut set of $G$ with
 $c(G-W)\ge |B|$.  As $G$ is $\frac{3}{2}$-tough, we have $|W|\ge \frac{3}{2}|B|$.
Therefore,
\begin{align*}
|A|+\sum_{t\ge 1}2t|\mathcal{C}_{2t+1}|=|W|&\ge \frac{3}{2}|B|\\
&=\frac{3}{2}\left(|B_0|+|B_1|\right)\\
&=\frac{3}{2}\left(|B_0|+\sum_{t\ge 1}(2t+1)|\mathcal{C}_{2t+1}|\right).
\end{align*}
By Lemma \ref{odd}, $\mathcal{C}_{2t+1}\ne \emptyset$ for some $t\ge 1$. Thus, it  follows that \[
|A|+\sum_{t\ge 1}t|\mathcal{C}_{2t+1}|\ge \frac{3}{2}|B_0|+\sum_{t\ge 1}\left(2t+\frac{3}{2}\right)|\mathcal{C}_{2t+1}|>|B_0|+\sum_{t\ge 1}(2t+1)|\mathcal{C}_{2t+1}|=|B|,
\]
a contradiction to Lemma \ref{AB}(v).
\end{proof}

By the same argument as Claim \ref{c3}, we have the following.
\begin{claim}\label{5c2}
$|B|\ge k+\ell$.
\end{claim}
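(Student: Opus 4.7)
The plan is to transcribe the proof of Claim \ref{c3} almost line for line, since the argument there relied only on two hypotheses: that $G$ is $1$-tough (used to invoke Lemma \ref{odd}) and that $G$ is $(k+\ell-1)$-connected. Both remain valid in the present setting because $\frac{3}{2}$-toughness implies $1$-toughness, and the connectivity hypothesis in each part of Theorem \ref{p5} matches the required $(k+\ell-1)$-connectivity (with $\ell=2,3$ in part (i) and $\ell=3$ in part (ii)).

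First I would split on whether $\mathcal{C}_1$ is empty. If some $H\in\mathcal{C}_1$ exists, with unique cross edge $uv$ where $v\in B$, then Lemma \ref{odd} guarantees a second odd component of $G-(A\cup B)$, so $A\cup\{v\}$ separates $V(H)$ from the rest of $G$. The $(k+\ell-1)$-connectivity forces $|A|\ge k+\ell-2$, and Lemma \ref{AB}(v) combined with $\sum_{t\ge 1}t|\mathcal{C}_{2t+1}|\ge 1$ (again by Lemma \ref{odd}) upgrades this to $|B|\ge |A|+2\ge k+\ell$.

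If instead $\mathcal{C}_1=\emptyset$, I would pick any $w\in B$ and use parts (ii) and (iii) of Lemma \ref{AB} to observe that every neighbor of $w$ outside $A$ sits in a distinct odd component of $G-(A\cup B)$, so $o(w)=d_{G-A}(w)$ and $A\cup N_{G-A}(w)$ is a vertex cut. Connectivity then yields $|A|\ge k+\ell-1-o(w)$; substituting into Lemma \ref{AB}(v) and using the trivial bound $o(w)\le\sum_{t\ge 1}|\mathcal{C}_{2t+1}|\le\sum_{t\ge 1}t|\mathcal{C}_{2t+1}|$ once more delivers $|B|\ge k+\ell$.

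There is essentially no obstacle: the argument is purely cut-set bookkeeping via Lemma \ref{AB}(v) and does not interact with either the sharper toughness bound $\frac{3}{2}$ or the forbidden induced subgraph, which is precisely why the authors write that the claim follows ``by the same argument as Claim \ref{c3}.''
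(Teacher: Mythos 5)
Your proposal is correct and is exactly what the paper intends: the paper gives no separate proof of this claim, stating only that it follows ``by the same argument as Claim~\ref{c3},'' and your transcription of that argument (the $\mathcal{C}_1\ne\emptyset$ and $\mathcal{C}_1=\emptyset$ cases, using Lemma~\ref{odd} via $\frac{3}{2}$-tough $\Rightarrow$ $1$-tough and the matching $(k+\ell-1)$-connectivity) is faithful and sound. One tiny point: in the second case you should also invoke Lemma~\ref{AB}(i) (as the paper does in Claim~\ref{c3}), since the independence of $B$ is what rules out neighbors of $w$ inside $B$ and so justifies $o(w)=d_{G-A}(w)$ and that $A\cup N_{G-A}(w)$ is a cut set.
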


By Claim \ref{5c}, there exists $u\in B$ with
 $o(u)\ge 2$. Thus, there exist odd components $H_1$ and $H_2$  of $G-(A\cup B)$ in $\bigcup_{t\ge 1}\mathcal{C}_{2t+1}$ with $e(u,H_i)=1$ for $i=1,2$.
Let $v_i$ be the neighbor of $u$ in $H_i$ for $i=1,2$.
As $H_1\in \bigcup_{t\ge 1}\mathcal{C}_{2t+1}$, there exists a vertex
 $v_1'\in V(H_1)$ such that $e(v_1',B)=1$ and any internal vertex on the shortest path $P$ in $H_1$ from $v_1$ to $v_1'$ (if any exists) has no neighbors in $B$. Denote by $u_1$ be the unique neighbor of $v_1'$ in $B$.

First, we prove Item (i).

Suppose that $\ell=2$.
By (i) and (iii) of Lemma \ref{AB}, $u$ is  adjacent to neither $u_1$ nor $v_1'$. Let $P'=u_1v_1'P^{-1}v_1uv_2$.
By Lemma \ref{AB}(iii), $N_G(u)\cap(V(H_1)\cup V(H_2))=\{v_1, v_2\}$.
By Lemma \ref{AB}(iv), $N_G(v_1)\cap B=N_G(v_2)\cap B=\{u\}$ and $N_G(v_1')\cap B=\{u_1\}$.
As $B$ is independent by Lemma \ref{AB}(i), one gets $G[B\cup \{v_1,v_1',v_2\}]\cong P'\cup (|B|-2)P_1$.
By Claim \ref{5c2}, $G$ contains $P_5\cup kP_1$ as an induced subgraph, a contradiction.

Suppose next that $\ell=3$.
By argument as above, there exists a vertex $v_2'\in V(H_2)$ such that $e(v_2',B)=1$ and any internal vertex on the shortest path $Q$ in $H_2$ from $v_2$ and $v_2'$ (if any exists) has no neighbors in $B$.
Denote by $u_2$ the unique neighbor of $v_2'$ in $B$.
It then follows from Lemma \ref{AB}(iii) that $u$ is adjacent to neither $v_1'$ nor $v_2'$.

\noindent{\bf Case 1.} $u_1\ne u_2$.

As above, by (iii) and (iv) of Lemma \ref{AB},  $u$ has only neighbors $v_1$ and $v_2$ in $V(H_1)\cup V(H_2)$, $u_i$ has only neighbor $v_i'$ in
in $V(H_1)\cup V(H_2)$ for $i=1,2$, $v_1$ and $v_2$ have only neighbor $u$ in $B$ and $v_i'$ has only neighbor $u_i$ in $B$ for $i=1,2$. Recall that $B$ is independent by Lemma \ref{AB}(i). Thus
 $G[B\cup V(P)\cup V(Q)]\cong L\cup (|B|-3)P_1$, where $L=u_1v_1'P^{-1}v_1uv_2Qv_2'u_2$ is an induced path of $G$. Evidently, $|V(L)|\ge 7$. By Claim \ref{5c2}, $G$ contains $P_7\cup kP_1$ as an induced subgraph, a contradiction.

\noindent{\bf Case 2.} $u_1=u_2$.

As $e(H_2,B)\ge 3$, there exists a vertex $v_2^*$ in $H_2\setminus \{v_2, v_2'\}$ having a neighbor in $B$ with the smallest distance to $v_2$ or $v_2'$ in $H_2$.
Denote by $u_2^*$ the unique neighbor of $v_2^*$ in $B$.
By Lemma \ref{AB}(iii) again, $u_2^*$ is not adjacent to any of $v_1,v_1',v_2,v_2'$. By Lemma \ref{AB}(i), $B$ is independent.
If $v_2'$ lies outside the shortest path $R$ from $v_2$ to $v_2^*$, $G[B\cup V(P)\cup V(R)]\cong L^*\cup (|B|-3)P_1$, where
$L^*=u_2^*v_2^*R^{-1}v_2uv_1Pv_1'u_1$ is an induced path of $G$.
By Claim \ref{5c2}, $G$ contains $P_7\cup kP_1$ as an induced subgraph, a contradiction.
Thus  $v_2'$ lies on the shortest path  from $v_2$ to $v_2^*$. Let $R'$ be a shortest path from $v_2'$ to $v_2^*$. Then $G[B\cup V(P)\cup V(R')]=L^{**}\cup (|B|-3)P_1$,
where $L^{**}=u_2^*v_2^*R'v_2'u_1v_1'P^{-1}v_1u$ is an induced path of $G$, so $G$ contains  $P_7\cup kP_1$ as an induced subgraph, also a contradiction.
This completes the proof of Item (i).

Next, we prove Item (ii). As is stated above, $G$ contains $P_7\cup kP_1$ as an induced subgraph, so it contains $P_6\cup kP_1$, a contradiction. This proves Item (ii).
\end{proof}

\vspace{5mm}

\noindent {\bf Acknowledgement.}
This work was supported by the National Natural Science Foundation of China (No.~12071158).

\end{document}